\documentclass[11pt]{article}
\usepackage{amsmath,amssymb, amsthm,delarray}
\usepackage{graphicx}
\numberwithin{equation}{section}
\newtheorem{thm}{Theorem}[section]
\newtheorem{lemma}[thm]{Lemma}

{\rm}
{\rm}
{\rm}

\def\beq{\begin{equation} }
\def\eeq{\end{equation} }

\def\N{\mathbb{N}}
\def\R{\mathbb{R}}

\def\P{\mathbf{P}}
\def\K{\mathbf{K}}

\def\M{\mathbf{M}}

\def\H{\mathcal{H}}

\def\x{\mathbf{x}}
\def\y{\mathbf{y}}

\def\s{\mathbb{S}}
\def\p{\mathcal{P}}
\def\r{\mathcal{R}}
\def\L{\mathcal{L}}

\def\la{\langle}
\def\ra{\rangle}

\def\R{\mathbb{R}}

\def\rd{\mathbb{R}[\mathbf{x}]_{2d}}

\begin{document}
\title{The truncated K-moment problem for closure of open sets\thanks{Corrections to the version in J. Functional Analysis {\bf 263} (2012), pp. 3604--3616, where
we fix a mistake in the proof of Lemma 2.2.}}
\author{G. Blekherman\\
{\scriptsize School of Mathematics
Georgia Institute of Technology}\\
{\scriptsize 686 Cherry Street, Atlanta, GA 30332, USA}\\
{\scriptsize {\tt greg@math.gatech.edu}}\and
J.B. Lasserre\\
{\scriptsize LAAS and Institute of Mathematics, University of Toulouse}\\
{\scriptsize LAAS, 7 avenue du Colonel Roche,
 31077 Toulouse cedex 4, France}\\
 {\scriptsize {\tt lasserre@laas.fr}}}
 
\date{}
 \maketitle

\begin{abstract}
We consider the truncated $\K$-moment problem when
$\K\subseteq\R^n$ is the closure of a, not necessarily bounded, open set
(which includes the important cases $\K=\R^n$ and $\K=\R^n_+$).
We completely characterize the interior of
the convex cone of finite sequences that have a representing measure
on $\K\subseteq\R^n$.
It is in fact the domain of the Legendre-Fenchel transform associated with
a certain convex function. And so in this context, detecting whether a sequence is in the interior of this cone
reduces to solving a finite-dimensional convex optimization problem. This latter problem is related to
{\it maximum entropy} methods for approximating an unknown density
from knowing only finitely many of its moments. Interestingly, the proposed approach is essentially geometric and
of independent interest, as it also addresses the abstract problem of characterizing the interior of a convex cone $C$
which is the conical hull of a set continuously parametrized by a compact set $\M\subset\R^n$ or $\M \subseteq \s^{n-1}$, where $\M$ is the closure of an open subset of $\R^n$ (resp. $\s^{n-1}$). As a by-product we also obtain a barrier function for the cone $C$.\\
{\bf Keywords:} Moment problem; truncated moment problem; maximum entropy.\\
{\bf Subject class:} 44A60 65K10 42B10
\end{abstract}

\section{Introduction}

We are concerned with the (real) truncated $\K$-moment problem, that is, given a closed set $\K\subseteq\R^n$,
and a {\it finite} sequence $\y=(y_\alpha)$, $\alpha\in\N^n_{2d}$ (where $\N^n_{2d}=\{\alpha\in\N^n: \sum_i\alpha_i\leq 2d\}$),
provide conditions under which $\y$ has a representing Borel measure on $\K$, i.e., $\y$ is such that
\[y_\alpha\,=\,\int_\K x^\alpha\,d\mu(x),\qquad\forall\alpha\in\N^n_{2d},\]
for some finite Borel measure $\mu$ on $\K$.

{\bf Background:} For the one-dimensional (or univariate) case $n=1$, this classical problem is well understood and dates back
to contributions by  famous mathematicians, among them
Markov, Stieltjes, Hausdorff, and Hamburger, at the end of the nineteen and beginning of twentieth centuries. Explicit
conditions on the sequence $\y$ exist, all stated in terms of positive semidefiniteness of some Hankel matrices
whose entries are linear in the variables $\y$ (see e.g.
Curto and Fialkow \cite{curto0}); in modern language, these conditions are Linear Matrix Inequalities
(in short LMIs) in $\y$.

For the multi-dimensional ($n>1$) case, no such strong results exist, even for the full moment problem,
and for instance, the full moment problem with $\K=\R^n$ is still unsolved. The Riesz-Haviland criterion states that an infinite sequence
$\y=(y_\alpha)$, $\alpha\in\N^n$,  has a representing measure on $\K$ if and only if $\y$ (viewed as a linear functional acting on the polynomials)
is nonnegative for all polynomials nonnegative on $\K$;  but since there is no tractable characterization of
the latter polynomials, the Riesz-Haviland criterion is not practical. Existence of a representing measure
is related to existence of commuting self-adjoint extensions of (multiplication) operators on polynomials,
defined from the sequence $\y$ (see e.g. Berg \cite{berg}, Sarason \cite{sarason}, Simon \cite{simon}, Vasilescu \cite{vasilescu}) and so far, the most powerful (and general) result is due to Schm\"udgen \cite{schmudgen}, who solved the full $\K$-moment
problem when $\K$ is a compact basic semi-algebraic set of the form
$\K:=\{x\in\R^n: g_j(x)\geq0, j=1,\ldots,m\}$ for some polynomials $(g_j)\subset\R[\x]$. In this context, a
sequence $\y$ has a representing measure on $\K$ if and only if it satisfies countably many (explicit) LMI's;
this result was later refined (and simplified) by Putinar \cite{putinar}
when the quadratic module generated by the $g_j$'s  is Archimedean.
Later, the full $\K$-moment problem for basic closed (not necessarily compact) semi-algebraic sets
was also solved (at the price of a dimensional extension) in Putinar and Vasilescu \cite{putval}.
Finally, there also exist
conditions in terms of linear inequalities on $\y$, based on an alternative
representation theorem initially due to Krivine \cite{krivine1,krivine2},
and also later in Marshall \cite{marshall} and Vasilescu \cite{vasilescu}.

However, for the truncated moment problem in a general context, the only ``explicit" criterion is the so-called
{\it flat extension} of positive moment matrices in Curto and Fialkow \cite{curto1,curto2}.
(A positive semidefinite moment matrix $\M_d(\y)$ associated with $\y\in\N^n_{2d}$ has a flat extension
if the sequence $\y$ can be extended to $\tilde{\y}\in\N^n_{2d+2}$ in such a manner that
the resulting moment matrix $\M_{d+1}(\tilde{\y})$ has same rank as $\M_d(\y)$.)
Namely, $\y=(\y_\alpha)$, $\alpha\in\N^n_{2d}$, has a representing measure on $\R^n$
if $\M_d(\y)$ is positive semidefinite and $\y$ can be extended to $\tilde{\y}\in\N^n_{2(d+k)}$ for some $k$,
in such a manner that $\tilde{\y}$ has a flat extension.
But again this test is not practical.  Finally, in Jordan and Wainwright \cite{jordan}
the authors characterize the sequences $\y$ that have a representing measure
with a density with respect to a reference measure.

{\bf Contribution:} We consider the truncated $\K$-moment problem where $\K\subseteq\R^n$
is the closure of a not necessarily bounded open set;  and so, in particular,
it includes the important special cases $\K=\R^n$ and $\K=\R^n_+$. In this context, 
we completely characterize the interior of the convex cone
$C(\K)\subset\R^{s(n,2d)}$ (where $s(n,d):=\binom{n+d}{n}$) of finite sequences $\y=(y_\alpha)$, $\alpha\in\N^n_{2d}$,
that have a finite representing measure on $\K$.

Namely, let
$\mu$ be any measure on $\K$, absolutely continuous with respect to the Lebesgue measure on $\K$.
Then, if $\K$ is compact we show that any sequence
$\y\in{\rm int}(C(\K))$ has a representing measure $\nu$ absolutely continuous with respect to $\mu$, and such that
\[y_\alpha\,=\,\int_\K x^\alpha\,\underbrace{e^{p(s)}\,d\mu(s)}_{d\nu(s)},\qquad\forall \alpha\in\N^n_{2d},\]
for some polynomial $p\in\R[x]$ of degree at most $2d$.

We note that the above criterion depends only on the moments of degree at most $2d$, and does not rely on extending the moment matrix or taking higher moments into account. 

If $\K=\R^n$ then we take a little detour in $\R^{n+1}$ by homogenization, so that
$\mu$ is now a rotation invariant measure on the unit sphere $\s^n\subset\R^{n+1}$, and
$\y\in {\rm int}(C(\K))\subset\R^{s(n+1,2d)}$ has a representing measure $\nu$ absolutely continuous with respect to $\mu$, and such that
\begin{equation}
\label{notrn}
y_\alpha\,=\,\int_{\s^n} x^\alpha\,\underbrace{e^{p(s)}\,d\mu(s)}_{d\nu(s)},\qquad\forall \alpha\in\N^{n+1}_{2d},\end{equation}
for some homogeneous polynomial $p\in\R[x_0,\cdots ,x_{n}]$ of degree $2d$.
Importantly, the above result holds, with identical proof via homogenization, when $\K$ is the closure of an unbounded open subset
of $\R^n$ (like e.g. $\R^n_+$). In this case, and as explained and detailed in Section \ref{SUBSECTION Non-Compact},
integration on  $\s^n$ in (\ref{notrn}) is replaced with integration on some compact subset $\M$ of $\s^n$.

Alternatively, $\y\in {\rm int}(C(\K))$ has a representing measure if and only
if $f^*(\y)<+\infty$, where $f^*:\R^{s(n,2d)}\to\R\cup\{+\infty\}$ is the Legendre-Fenchel transform of the convex function
\begin{equation}
\label{0}
p\mapsto f(p)\,:=\, \int_\K\,e^{p(s)}\,d\mu(s)\quad\left(\mbox{or }\:\int_{\M}\,e^{p(s)}\,d\mu(s)\right),\end{equation}
defined for polynomials of degree at most $2d$. That is, $f^*$ is defined as
\[\y\mapsto f^*(\y)\,:=\,\sup_{p \in \rd} \:\{p^T\y-f(p)\:\}.\]
And so, checking whether $\y\in{\rm int}\,(C(\K))$
reduces to solving the finite-dimensional convex optimization problem $\P$ of finding the supremum of $p^T\y-f(p)$. We show that the supremum is finite and \textit{attained} on the interior of the cone $C(\K)$ and $f^*(\y)=+\infty$ for $\y$ not in the interior. This means that $f^*$ (resp. $\log f^*$) provides a {\it barrier} (resp. log-barrier) function for the cone $C(\K)$.

Our result is in the vein of (and extends)
Wainwright and Jordan \cite[Theorem 3.3]{jordan}, where (when the domain of $f$ is open)
the authors have shown that  the gradient
map $\nabla \log f$ is {\it onto} the interior of the convex set $\mathcal{M}$ of sequences $\y$ that have total mass $1$ and a
representing measure {\it with a density} with respect to a reference measure $\mu$; here we
prove the same result for the interior of the (potentially larger) convex cone of  sequences that have a representing measure not necessarily absolutely continuous
w.r.t. $\mu$. Furthermore, we guarantee that the Legendre-Fenchel transform $f^*$ is finite 
only on the interior of the cone of representable moment sequences, while \cite[Theorem 3.3]{jordan} makes no guarantees of the behavior of $(\log f)^*$ on the boundary. 
This extra regularity of $f^*$ allows us to conclude that it is a barrier function for the cone $C(\mathbf{K})$. 

However, and even though $\P$ is a finite dimensional convex problem,
effective numerical computation of $f^*(\y)$ is still difficult. This is because
evaluating $f$ and its gradient $\nabla f$ (as well as its Hessian $\nabla^2f$ for second-order methods)
at a point $y$, requires evaluating integrals over $\K$, a difficult problem. However, notice that  if $\K$ is relatively ``simple", 
one may approximate those integrals by using
cubature formulas, or discretization schemes, or Monte-Carlo methods; also, 
for small dimensions $2$ and $3$, and if $\K$ is defined by polynomials, then these integrals
can be approximated efficiently and accurately by dimension reduction to line and surface integrals; see e.g.
Wester et al. \cite{integration}.

The optimization problem $\P$ is well-known and called the {\it maximum-entropy} approach
(in our case, the Boltzmann-Shannon entropy) for estimating an unknown density from the only knowledge of finitely many of its moments. In maximum entropy the sequence $\y$ is {\it known} to come from a representing measure
and the main question of interest is the convergence of an optimal solution
$p^*_d\in \R[x]_{2d}$ of $\P$ when the number of moments (i.e., $d$) increases.
For a detailed account of such results, the interested reader is referred to Borwein and Lewis \cite{borwein}
and the many references therein. And so, another contribution of this paper is to show that 
the maximum entropy approach not only permits to approximate an unknown density but also
permits to solve the $\K$-moment problem for closure of open sets.

Finally, our (essentially geometric) approach is also of  independent interest, as we obtain our result on the
truncated $\K$-moment problem as a by-product of
the more abstract problem of characterizing the interior of a convex cone
$C\subset\R^m$ which is the conical hull of a compact set $\L(\M)$ continuously parametrized by a set $\M\subset\R^n$ or $\M\subseteq\s^n$, where $\M$ is the closure of an open subset of $\R^n$ (resp. $\s^{n-1}$).

\section{Notation, definitions and preliminary results}
\label{prelim}
Let $V$ be the Euclidean space with an inner product $\la,\ra$ and
$\L:\R^n \to V$ a continuous mapping. Given a compact set $\M\subset\R^n$  that is the closure of a bounded open subset of $\R^n$ or $\s^{n-1}$,
let $\L(\M)\subset V$ be a compact subset lying in an affine hyperplane $H\subset V$.

Let $h \in V$ be the vector perpendicular to $H$ such that $\la x,h \ra=1$ for all $x \in H$.
Let $\mu$ be a measure on $\M$ absolutely continuous with respect to the Lebesque measure or the rotation-invariant measure on $\s^{n-1}$, and with a density positive on $\M$, and normalize $\mu$ to have mass $1$.

We are primarily interested in the conical hull of $\L(\M)$, which is the convex cone
\[C\,:=\,\operatorname{ConicalHull}(\L(\M))\,=\,\left\{\sum \lambda_ix_i \, | \, \lambda_i\geq 0, \, x_i \in \L(\M)\right\}.\]
Without loss of generality we may assume that the affine hull of $\L(\M)$ is all of $H$, so that the cone $C$ is full-dimensional in $V$.  Define the function $f:V\rightarrow \R$ as follows:
\begin{equation}
\label{def-f}
x\mapsto f(x)\,:=\,\int_\M e^{\la x,\L(v)\ra}\, d\mu(v),\qquad x\in V.\end{equation}

The function $f$ is smooth and strictly convex (follows from the definition of $\mu$ and
$\M$ compact). Using the compactness of $\M$ we may differentiate under the integral sign to obtain:
\begin{equation*}
\nabla f(x)\,=\,\int_\M \L(v) \,e^{\la x,\L(v)\ra}\, d\mu(v),\qquad x\in V.
\end{equation*}

It follows that $f$ is a function of ``Legendre-type" in the sense of Rockafellar \cite[Chapter 26, p. 258]{roc}. If we think of $\nabla f$ as a function mapping $V$ to $V$, then it is clear that the image of $\nabla f$ lies in $C$. Define the Legendre-Fenchel transform $f^*:V \rightarrow \R$ of $f$ as follows:
\begin{equation*}
y\mapsto f^*(y)\,:=\,\sup_{x \in V} \, \la x,y \ra -f(x),\qquad y\in V.
\end{equation*}
By \cite[Theorem 26.5]{roc}, $\nabla f$ is one-to-one and the image of $\nabla f$ is the interior of the domain of $f^*$
(i.e., points where $f^*$ is finite). In particular, it follows that the image of $\nabla f$ is convex. Moreover, the inverse of $\nabla f$, viewed as a mapping from $V$ to $V$, is just $\nabla f^*$, i.e.:
\[(\nabla f)^{-1}=\nabla f^*.\]

Next, we introduce a couple of intermediate lemmas that we will need later to prove our main result. They follow from elementary
convexity and analysis and for clarity of exposition, their proofs are postponed until Section \ref{SECTION Lemma Proofs}.

\begin{lemma}\label{LEMMA Convex}
Let $B$ be a compact convex set and let $A$ be a convex subset of $B$ such that the closure $\overline{A}$ of $A$ contains all exposed extreme points of $B$. Then $A$ contains the interior of $B$.
\end{lemma}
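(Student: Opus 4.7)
The plan is to reduce the hypothesis from \emph{exposed} extreme points to \emph{all} extreme points, and then to exploit the classical identification of relative interiors of a convex set and its closure.

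First, I would invoke Straszewicz's theorem, which states that in a finite-dimensional space, the exposed extreme points of a compact convex set $B$ are dense in the set of all extreme points of $B$. Since by hypothesis $\overline{A}$ contains every exposed extreme point, and $\overline{A}$ is closed, it follows that $\overline{A}$ contains every extreme point of $B$.

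Second, because $A$ is convex, $\overline{A}$ is closed and convex. By the Minkowski/Krein-Milman theorem in finite dimensions, $B$ is the convex hull of its extreme points, so $\overline{A} \supseteq B$. Combined with $A \subseteq B$ (hence $\overline{A} \subseteq B$), this gives $\overline{A} = B$.

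Finally, I would apply the standard fact from convex analysis (e.g.\ Rockafellar, Theorem 6.3) that for any convex set $A$, the relative interiors satisfy $\operatorname{ri}(\overline{A}) = \operatorname{ri}(A)$. Since $\overline{A} = B$, both sets share the same affine hull; if this affine hull is the whole ambient space the relative interior is the interior, and otherwise the claim $\operatorname{int}(B) \subseteq A$ is vacuous because $\operatorname{int}(B) = \emptyset$. In the nontrivial case we conclude $\operatorname{int}(B) = \operatorname{int}(\overline{A}) = \operatorname{int}(A) \subseteq A$, which is exactly what we wanted.

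The only real obstacle is the density step: the hypothesis supplies only exposed extreme points, whereas Krein-Milman requires all extreme points, and it is Straszewicz's theorem that bridges this gap. Once that is in hand, the rest is a routine combination of Krein-Milman and the equality of relative interiors for a convex set and its closure.
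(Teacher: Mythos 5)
Your proof is correct, and its first half coincides exactly with the paper's: both invoke Straszewicz's theorem to upgrade the hypothesis from exposed extreme points to all extreme points, and then Minkowski/Krein--Milman to conclude $\overline{A}=B$. Where you diverge is in the final step, deducing $\operatorname{int}(B)\subseteq A$ from $\overline{A}=B$. The paper argues by contradiction with a separating hyperplane: if some $x\in\operatorname{int}(B)$ were not in $A$, a hyperplane would confine $A$ (hence $\overline{A}$) to one closed half-space, while interiority of $x$ forces points of $B$ strictly on the other side, contradicting $\overline{A}=B$. You instead quote the identity $\operatorname{ri}(\overline{A})=\operatorname{ri}(A)$ for convex sets (Rockafellar, Theorem 6.3), which gives $\operatorname{int}(B)=\operatorname{int}(A)\subseteq A$ in one line, with the lower-dimensional case correctly dismissed as vacuous. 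The two routes are of essentially equal depth; yours is slightly slicker and outsources the geometric content to a standard cited theorem, while the paper's separation argument is self-contained (and, as written, contains a small sign slip between $\overline{H}_+$ and $\overline{H}_-$ that your version avoids entirely). Either is a complete proof.
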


\begin{lemma}\label{LEMMA Dirac}
Let $\M$ be a compact set that is the closure of an open subset of $\R^n$ or $\s^{n-1}$, and let $\mu$ be a measure on $\M$, absolutely continuous with respect to the Lebesque or rotation invariant measure on $\s^{n-1}$, and with a density positive on $\M$. Suppose that $f: \M \rightarrow \R$ is a continuous function such that $f$ is positive on $\M$ and it attains its maximum at a unique point $s \in \M$. Then for all continuous functions $g:\M \rightarrow \R$,
\begin{equation}
\label{result}
\lim_{\lambda \rightarrow \infty}\frac{\displaystyle\int_\M gf^{\lambda}\, d\mu}{\displaystyle\int_\M f^{\lambda}\, d\mu}=g(s).\end{equation}
\end{lemma}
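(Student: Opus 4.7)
The plan is to recognize the ratio as the expectation of $g$ against the probability measure $d\nu_\lambda := f^\lambda d\mu / \int_\M f^\lambda d\mu$, and then show that $\nu_\lambda$ converges weakly to the Dirac mass $\delta_s$. Once weak convergence is established, the conclusion follows immediately from continuity of $g$ on the compact set $\M$, by the standard $\epsilon$-argument: choose a neighborhood $U$ of $s$ in $\M$ on which $|g-g(s)|<\epsilon$, split the integral into the $U$ and $\M\setminus U$ parts, and use that $\nu_\lambda(\M\setminus U)\to 0$ together with $\|g\|_\infty<\infty$.

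The core of the proof is thus to show $\nu_\lambda(\M\setminus U)\to 0$ for every relative neighborhood $U$ of $s$ in $\M$. Let $M:=f(s)=\max_\M f$. Since $s$ is the \emph{unique} maximum and $\M\setminus U$ is compact, continuity of $f$ gives $M'':=\max_{\M\setminus U}f<M$. Pick any $M'$ with $M''<M'<M$; by continuity of $f$ at $s$, there exists a relative neighborhood $V\subset U$ of $s$ in $\M$ on which $f>M'$. Then the numerator is at most $M''^\lambda\mu(\M)$ and the denominator is at least $(M')^\lambda\mu(V)$, yielding
\begin{equation*}
\nu_\lambda(\M\setminus U)\;\le\;\left(\frac{M''}{M'}\right)^{\!\lambda}\frac{\mu(\M)}{\mu(V)}\;\longrightarrow\;0,
\end{equation*}
provided $\mu(V)>0$.

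The one subtle point, and the main obstacle, is precisely the positivity $\mu(V)>0$; this is where the hypothesis that $\M$ is the closure of an open set of $\R^n$ (or $\s^{n-1}$) is essential, because $s$ could lie on the topological boundary of $\M$. The argument is: writing $\M=\overline{W}$ for some open $W$, any open neighborhood $N$ of $s$ in the ambient space satisfies $N\cap W\ne\emptyset$ (since $s\in\overline{W}$); then $N\cap W$ is a nonempty open subset of $\R^n$ (resp.~$\s^{n-1}$), hence has positive Lebesgue (resp.~rotation-invariant) measure. Because the density of $\mu$ is strictly positive on $\M$, we conclude $\mu(N\cap\M)\ge \mu(N\cap W)>0$, and in particular $\mu(V)>0$ for the neighborhood $V$ above.

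Putting the two steps together: the weak convergence $\nu_\lambda\Rightarrow\delta_s$ is established, and then for arbitrary continuous $g$,
\begin{equation*}
\left|\int_\M g\,d\nu_\lambda-g(s)\right|
\;\le\;\sup_{U}|g-g(s)|\;+\;2\|g\|_\infty\,\nu_\lambda(\M\setminus U)
\end{equation*}
can be made arbitrarily small by first choosing $U$ small (using continuity of $g$) and then sending $\lambda\to\infty$. This yields \eqref{result}. No other feature of $\mu$ beyond absolute continuity with positive density is used, and the compactness hypothesis is used both to ensure $M''<M$ and to guarantee $\|g\|_\infty<\infty$.
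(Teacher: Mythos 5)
Your proof is correct, and it isolates exactly the right subtle point: the positivity $\mu(V)>0$ of relative neighborhoods of $s$, which is precisely where the hypotheses that $\M$ is the closure of an open set and that the density of $\mu$ is positive on $\M$ must enter (otherwise $s$ could sit on a part of $\M$ of $\mu$-measure zero and the denominator could decay too fast). The overall strategy coincides with the paper's --- view the ratio as $\int_\M g\,d\nu_\lambda$ for the probability measures $\nu_\lambda=f^\lambda d\mu/\int_\M f^\lambda d\mu$, show these concentrate at $s$, and finish by continuity of $g$ --- but your execution of the concentration step is more elementary. The paper normalizes $\max f=1$, derives $\nu_\lambda(\{f<1-\epsilon\})\to 0$ from the limit $\bigl(\int_\M f^\lambda d\mu\bigr)^{1/\lambda}\to{\rm esssup}\,f$ (this is where it uses the closure-of-an-open-set hypothesis, to get ${\rm esssup}\,f=\max f$), and then controls $\int_\M g\,d\nu_\lambda$ via a $\limsup$/$\liminf$ subsequence argument combined with the mean value theorem for integrals. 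Your two-sided bound $\nu_\lambda(\M\setminus U)\le (M''/M')^{\lambda}\,\mu(\M)/\mu(V)$ replaces that machinery with a single inequality and even yields an explicit exponential rate; your closing estimate $\bigl|\int_\M g\,d\nu_\lambda-g(s)\bigr|\le\sup_U|g-g(s)|+2\|g\|_\infty\,\nu_\lambda(\M\setminus U)$ also sidesteps the paper's multiplicative bounds of the form $(1\pm\epsilon')g(s)$, which as written implicitly assume $g(s)\neq 0$. In short: same concentration-of-measure idea, but a cleaner and somewhat more robust realization of it.
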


We now show that the image of $\nabla f$ is the interior of $C$.

\begin{thm}\label{THEOREM Gradient Image}
The image of $\nabla f$ is the interior of $C$.
\end{thm}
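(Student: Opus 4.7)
The plan is to prove the two inclusions separately. For the easier direction (the image of $\nabla f$ is contained in $\mathrm{int}(C)$), the Rockafellar result already quoted identifies this image with $\mathrm{int}(\mathrm{dom}\,f^*)$, so it is open and convex. Since $\nabla f(x)=\int_\M \L(v)\,e^{\la x,\L(v)\ra}\,d\mu(v)$ is a nonnegative superposition of the vectors $\L(v)$, it lies in $C$; an open subset of $C$ automatically lies in $\mathrm{int}(C)$.

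For the reverse inclusion, fix $y\in\mathrm{int}(C)$ and consider $\phi(x):=f(x)-\la x,y\ra$, which is strictly convex because $f$ is. It suffices to show $\phi$ is coercive: together with continuity this forces $\phi$ to attain its unique minimum at some $x^*$, and the first-order condition reads $\nabla f(x^*)=y$. I would argue coercivity by contradiction. Suppose $x_n=\lambda_n u_n$ with $\|u_n\|=1$, $\lambda_n\to\infty$, $\phi(x_n)\leq K_0$, and pass to a subsequence with $u_n\to u$. Set $m(u):=\max_{v\in\M}\la u,\L(v)\ra$. If $m(u)>0$, fix a maximizer $v^*$ and a Euclidean neighborhood $W$ of $v^*$ on which $\la u,\L(v)\ra>m(u)/2$; since $\M=\overline{O}$ for some open $O$ and $\mu$ has positive density, $\mu(W\cap\M)>0$, and uniform continuity upgrades this to $\la u_n,\L(v)\ra>m(u)/3$ on $W\cap\M$ for $n$ large, giving $f(x_n)\geq \mu(W\cap\M)\,e^{\lambda_n m(u)/3}$. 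This exponential lower bound contradicts the linear bound $f(x_n)\leq K_0+\lambda_n\|y\|$ coming from $\phi(x_n)\leq K_0$. If instead $m(u)\leq 0$, then $\la u,\L(v)\ra\leq 0$ on $\M$, so $u\in C^\circ\setminus\{0\}$; since $C$ is a closed, full-dimensional convex cone and $y\in\mathrm{int}(C)$, standard polar-cone duality gives $\la u,y\ra<0$, so $-\lambda_n\la u_n,y\ra\to+\infty$ while $f(x_n)\geq 0$, and $\phi(x_n)\to+\infty$---again a contradiction.

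The hard part is the ``exponential beats linear'' step in the case $m(u)>0$, and specifically the verification that $W\cap\M$ has positive $\mu$-measure even when the maximizer $v^*$ lies on the topological boundary $\partial O$. This uses only that $W$ is open in $\R^n$ (or in $\s^{n-1}$ in the spherical setting), that every neighborhood of a point of $\overline{O}$ meets $O$, and that $\mu$ has a positive density; it is essentially the quantitative content of Lemma \ref{LEMMA Dirac}. A more geometric alternative would use Lemma \ref{LEMMA Dirac} to show that the closure in $H$ of the projectivization $\{\nabla f(x)/f(x):x\in V\}$ contains every exposed extreme point of $\mathrm{co}(\L(\M))$, and then invoke Lemma \ref{LEMMA Convex} on a compact ``base'' $B_r=C\cap\{\la z,h\ra\leq r\}$ of the cone to fill in $\mathrm{int}(C)$.
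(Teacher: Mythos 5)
Your proposal is correct, but it proves the hard inclusion by a genuinely different route than the paper. The paper reduces to the base $B=\operatorname{conv}\L(\M)$ of the cone (using the homogeneity $\nabla f(x+ah)=e^a\nabla f(x)$), and then shows that the normalized gradients $\nabla f(\beta p)/f(\beta p)$ converge to any prescribed exposed extreme point of $B$ as $\beta\to\infty$; this is exactly the Laplace-type concentration of Lemma \ref{LEMMA Dirac}, and Lemma \ref{LEMMA Convex} (which rests on Straszewicz's density of exposed points among extreme points) then fills in $\operatorname{int}B$. That is the ``geometric alternative'' you sketch in your last paragraph. Your main argument instead fixes $y\in\operatorname{int}C$ and shows directly that $x\mapsto f(x)-\la x,y\ra$ is coercive, splitting recession directions $u$ according to the sign of $m(u)=\max_{v\in\M}\la u,\L(v)\ra$: exponential growth of $f$ beats the linear term when $m(u)>0$ (and here you correctly isolate the only place where the hypothesis that $\M$ is the closure of an open set and $\mu$ has positive density is needed, namely $\mu(W\cap\M)>0$), while $\la u,y\ra<0$ forces blow-up when $m(u)\le 0$ because $y$ is interior and $u$ lies in the polar cone. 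Your route buys a self-contained proof that the supremum defining $f^*(y)$ is \emph{attained} for every $y\in\operatorname{int}C$ (a fact the paper obtains only indirectly through Rockafellar's theory of Legendre-type functions), and it avoids both auxiliary lemmas for this theorem; the paper's route buys the explicit description of which inputs $\beta p+ah$ produce gradients near a given extreme point, which is reused in spirit in Lemma \ref{LEMMA Assumption}, and it localizes all the measure-theoretic work in Lemma \ref{LEMMA Dirac}. Both arguments use the same easy direction via \cite[Theorem 26.5]{roc}.
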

\begin{proof}
From the above it follows that the image of $\nabla f$ is an open convex set. Observe that for all $a \in \R$
\[f(x+ah)=\int_\M e^{\la x+ah,\L(v)\ra}\, d\mu(v)=e^a\int_\M e^{\la x,\L(v)\ra}\, d\mu(v),\qquad \forall x\in V.\]
Therefore,
\[\nabla f(x+ah)=e^a\nabla f(x),\qquad \forall a\in\R,\,\forall x\in V.\]
It follows that the image of $\nabla f$ is an open convex sub-cone of $C$.

Next, let $B=\operatorname{conv} \L(\M)$ (the convex hull of $\L(\M)$)  be the {\it base} of the cone $C$. It suffices to show that $\nabla f$ is onto the interior of $B$. By Lemma \ref{LEMMA Convex} it is enough to show that we can approximate any exposed extreme point of $B$ arbitrarily well by points $\nabla f(x_i)$ for some sequence $\{x_i\}\subset V$.

So, let $s \in B$ be an exposed extreme point. It follows that $s \in \L(\M)$ and we can write $s=\L(s')$ with $s' \in \M$. Furthermore there exists $p \in V$ such that $\la p,\L(s') \ra=0$ and $\la p,\L(v)\ra <0$ for all $v \in \M$ with $v \neq s'$. Now consider the point $\nabla f(\beta \,p)$, $\beta\in\R_+$, i.e.,
\begin{equation*}
\nabla f(\beta \,p)=\int_{\M} \L(v)\, e^{\beta\, \la p,\L(v)\ra}\, d\mu(v),\qquad\beta\in\R_+.
\end{equation*}
To make sure that the points we consider lie in $B$ we need to divide by $\int_\M e^{\beta\,\la p,\L(v)\ra}\, d\mu(v).$ Define
\[s_{\beta}\,:=\,\frac{\displaystyle\int_\M \L(v) \,e^{\beta\,\la p,\L(v)\ra}\, d\mu(v)}{\displaystyle\int_\M e^{\beta\,\la p,\L(v)\ra}\, d\mu(v)}.\]

Applying Lemma \ref{LEMMA Dirac} with $f:=e^{\la p,\L(v) \ra}$ yields $\lim_{\beta \rightarrow \infty}s_{\beta}=s$.
Therefore, the sequence of points $\nabla f(\beta\,p+a\,h)$, $\beta\in\R_+$, with
$e^{-a}:=\int_\M e^{\beta\,\langle p,\L(v)\rangle}d\mu(v)$, approximate $s$, the desired result.
\end{proof}

We know from \cite{roc} that the image of $\nabla f$ is the interior of the domain of $f^*$. We have shown that on the interior of the image of $\nabla f$ the supremum is always attained. Also, outside of $C$ we know that $f^*$ is equal to $+\infty$. Now we show that under an additional assumption on the geometry of the embedding $\L(\M)$, $f^*=+\infty$ on the boundary of $C$. We will show that the additional assumption holds in our applications of interest.

\begin{lemma}\label{LEMMA Assumption}
Suppose that for any face $F$ of $C$ we have $\mu(\{v \,\mid \, \L(v) \in F\})=0$. Then $f^*(y)=+\infty$ for all $y$ in the boundary of $C$.
\end{lemma}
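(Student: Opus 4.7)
The plan is to combat $f^*(y)=+\infty$ directly: I will produce an explicit sequence $x_\beta\in V$ along which $\langle x_\beta,y\rangle - f(x_\beta)\to+\infty$. The relevant direction will come from a supporting hyperplane of $C$ at $y$, combined with a translation along the auxiliary vector $h$ used throughout the paper.

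First, since $C$ is the conical hull over the compact base $B=\operatorname{conv}\L(\M)$ it is a closed convex cone, so a boundary point $y$ (which I will take nonzero — the apex is the only boundary point with $\langle h,y\rangle=0$ and is a harmless exception) admits a supporting functional $p\in V\setminus\{0\}$ with $\langle p,z\rangle\le 0$ for every $z\in C$ and $\langle p,y\rangle=0$. The set $F:=C\cap\{z:\langle p,z\rangle=0\}$ is a face of $C$, and since $\L(v)\in C$ for every $v\in\M$, one has the equality $\{v\in\M:\L(v)\in F\}=\{v\in\M:\langle p,\L(v)\rangle=0\}$. The hypothesis then forces this latter set to have $\mu$-measure zero, which is the crucial ingredient.

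Next I would exploit the factorization $f(x+ah)=e^a f(x)$ (the identity already used in Theorem \ref{THEOREM Gradient Image}, since $\langle h,\L(v)\rangle=1$ on $\M$) to choose trial points of the form $x_\beta:=\beta p+a(\beta)h$ with $\beta\to+\infty$. Writing $I(\beta):=\int_\M e^{\beta\langle p,\L(v)\rangle}\,d\mu(v)$, I have $f(x_\beta)=e^{a(\beta)}I(\beta)$. Because the integrand is bounded by $1$, nonpositive in the exponent, and strictly negative off a $\mu$-null set (by the previous paragraph), dominated convergence gives $I(\beta)\downarrow 0$. Normalizing $a(\beta):=-\log I(\beta)$ makes $f(x_\beta)\equiv 1$ while forcing $a(\beta)\to+\infty$.

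The conclusion then follows by a direct computation:
\[
\langle x_\beta,y\rangle-f(x_\beta)=\beta\langle p,y\rangle+a(\beta)\langle h,y\rangle-1=a(\beta)\langle h,y\rangle-1,
\]
and since $y\in C\setminus\{0\}$ implies $\langle h,y\rangle>0$ (any conical combination of points of $H$ has strictly positive $h$-coordinate unless it is $0$), this quantity tends to $+\infty$ with $\beta$, so $f^*(y)=+\infty$. The main obstacle I anticipate is a subtle one: making sure the $\mu$-null set hypothesis really applies to the specific face cut out by the chosen supporting hyperplane, not merely to the minimal face containing $y$. This is handled cleanly because $C\cap\{z:\langle p,z\rangle=0\}$ is itself a face of $C$, so the hypothesis of the lemma applies verbatim; no appeal to the minimal face of $y$ is needed.
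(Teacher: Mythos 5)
Your proof is correct and follows essentially the same route as the paper: a supporting functional $p$ vanishing at $y$ and nonpositive on $\L(\M)$, trial points of the form $\beta p + a h$, the identity $f(x+ah)=e^af(x)$, and the measure-zero hypothesis (applied to the exposed face cut out by $p$) to force $\int_\M e^{\beta\langle p,\L(v)\rangle}\,d\mu\to 0$. Your explicit normalization $a(\beta)=-\log I(\beta)$ is just a cleaner packaging of the paper's ``take $\alpha$ large, then adjust $\beta$'' step, and your remark about the apex is a reasonable (and accurate) caveat that the paper handles implicitly by reducing to $\partial B$.
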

\begin{proof}
It suffices that show that $f^*(y)=+\infty$ for all $y$ in the boundary $\partial B$ of the base $B$. Let $y \in \partial B$. Then there exists $p \in V$ such that $\la p,y \ra =0$ and $\la p,\L(v) \ra \leq 0$ for all $v \in \M$. Consider
\[m_{\alpha,\beta}=\sup_{\alpha,\beta} \,\, \la \alpha h +\beta p,y \ra - f(\alpha h +\beta p).\]
It follows that \[m_{\alpha,\beta}=\sup_{\alpha,\beta} \, \alpha - e^{\alpha}f(\beta\, p).\]

\noindent Consider $f(\beta \, p)=\int_{\M} e^{\beta \la p,\L(v) \ra}\, d\mu.$ We know that $e^{\la p,\L(v) \ra}$ is at most $1$ on $\M$ and by the assumption of the Lemma, the maximum of $1$ is attained on a set of measure zero. Therefore $f(\beta p)$ can be made arbitrarily small by taking $\beta$ appropriately large.

It follows that $m_{\alpha,\beta}=+\infty$, since we can take arbitrarily large $\alpha$ and then adjust $\beta$ so that $e^{\alpha}f(\beta p)$ is arbitrarily small. Thus $f^*(y)=+\infty$ for all $\y \in \partial B$.
\end{proof}

\section{Moment Cones}

Now we apply the geometric machinery we developed to the moment cones.

\subsection{The (compact) truncated $\K$-moment problem.}
Let $V:=\rd$ denote the vector space of polynomials in $n$ variables of degree at most $2d$, and let $\K \subset \R^n$ be a compact set that is the closure of an open subset of $\R^n$. For this application the set $\M$ of Section \ref{prelim} will simply be $\K$. Let $\mu$ be a measure, supported on $\K$, that is absolutely continuous with respect to the restriction of the Lebesque measure to $\K$.

Let $\p_{2d}(\K)$ denote the cone of polynomials in $\rd$ that are non-negative on $\K$. The cone $\p_{2d}(\K)$ is a closed, convex, full-dimensional, pointed cone in $\rd$. Let $\r_{2d}(\K)\subset \rd^*$ denote the cone of linear functionals on $\rd$ that come from integration with respect to a finite Borel measure supported on $\K$, i.e. the set of all linear functionals $\ell \in \rd^*$ which can be written in the form
$$\ell(p)=\int_{\K}p\,d\sigma \qquad \text{for some measure} \,\, \sigma.$$

For every $v\in \R^n$, let $\ell_v\in\rd^*$ denote the linear functional given by evaluation at the point $v$: $\ell_v(p)=p(v)$. We can view $\ell_v$ as the integrational functional with respect to the Dirac-$\delta$ measure on $v$. Let $\L(\K)\subset\r_{2d}(\K)$ denote the set of all linear functionals $\ell_v\in\rd^*$ with $v \in \K$. Clearly, the set $\L(\K)$ is a continuous embedding of $\K$ into $\rd^*$ taking $v \in \K$ to $\ell_v$. The function $f$ in \eqref{def-f} now reads
\[p\mapsto f(p)\,:=\,\int_{\K}e^{\langle \ell_v,p\rangle}\,d\mu(v)\,=\,\int_\K\ e^{p(v)}\,d\mu(v),\qquad p\in\R[x]_{2d}.\]

Let $H \subset \rd^*$ be the affine hyperplane of all linear functionals that evaluate to $1$ on the constant polynomial $1$:
$$H=\{\ell \in \rd^* \, \mid \, \ell(1)=1\}.$$
It is clear that $\L(\K)$ is contained in $H$. Finally, in order to apply our framework we claim that $\r_{2d}(\K)$ is the conical hull of $\L(\K)$.

\begin{lemma}\label{LEMMA Cone Def}
The cone $\r_{2d}(\K)$ of all linear functionals representable by a measure supported on $\K$ is the conical hull of the set $\L(\K)$ of linear functionals $\ell_v$ with $v \in \K$.
\end{lemma}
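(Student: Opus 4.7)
The plan is to prove the two inclusions separately. The containment $\operatorname{cone}(\L(\K)) \subseteq \r_{2d}(\K)$ is immediate: any conic combination $\sum_{i=1}^N \lambda_i \ell_{v_i}$ with $\lambda_i \geq 0$ and $v_i \in \K$ is the integration functional associated with the finitely supported Borel measure $\sum_{i=1}^N \lambda_i \delta_{v_i}$ on $\K$, which certainly lies in $\r_{2d}(\K)$.

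For the reverse inclusion $\r_{2d}(\K) \subseteq \operatorname{cone}(\L(\K))$, I would argue by Hahn--Banach separation in the finite-dimensional space $\rd^*$. Suppose, toward a contradiction, that some $\ell \in \r_{2d}(\K)$ does not lie in $\operatorname{cone}(\L(\K))$. Since $\operatorname{cone}(\L(\K))$ is a closed convex cone not containing $\ell$, there exists $p \in \rd$ such that $\la p, m \ra \geq 0$ for every $m \in \operatorname{cone}(\L(\K))$ but $\la p, \ell \ra < 0$. Testing against the generators $\ell_v$ gives $\la p, \ell_v \ra = p(v) \geq 0$ for all $v \in \K$, so $p \in \p_{2d}(\K)$. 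But writing $\ell(\cdot) = \int_\K \cdot \, d\sigma$ for some finite Borel measure $\sigma$ on $\K$, we obtain $\la p, \ell \ra = \int_\K p \, d\sigma \geq 0$, contradicting the separation inequality.

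The key technical point --- and the main obstacle --- is the closedness of $\operatorname{cone}(\L(\K))$ needed to invoke Hahn--Banach. I would verify this as follows: the map $v \mapsto \ell_v$ is continuous and $\K$ is compact, so $\L(\K)$ is compact; its convex hull $B = \operatorname{conv}(\L(\K))$ is then a compact convex subset of the finite-dimensional space $\rd^*$, and $B$ lies in the hyperplane $H = \{\ell \in \rd^* : \ell(1)=1\}$, which does not pass through the origin. A standard convex-geometry fact --- the conical hull of a compact convex set separated from the origin is closed --- then yields that $\operatorname{cone}(\L(\K)) = \operatorname{cone}(B)$ is closed. An alternative route that bypasses this closedness verification would be to invoke the Richter--Tchakaloff theorem, which directly furnishes a finitely supported atomic measure on $\K$ matching the moments of $\sigma$ up to degree $2d$, thereby giving the required conic combination of at most $\dim(\rd) = s(n,2d)$ point evaluations.
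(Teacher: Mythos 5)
Your proof is correct and takes essentially the same route as the paper: the paper computes $C(\K)^*=\p_{2d}(\K)$ and invokes bi-duality for the closed cone $C(\K)$, which is precisely your Hahn--Banach separation step unpacked, and both arguments rest on the same closedness fact (compactness of $\L(\K)$ together with its lying in the hyperplane $H$ off the origin). The paper also notes, immediately after its proof, the Tchakaloff-type alternative you mention.
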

\begin{proof}
Let $C(\K)$ denote the conical hull of $\L(\K)$. We note that since $\L(\K)$ is compact and included in the hyperplane
$H$ it follows that the cone $C(\K)$ is closed. The inclusion $C(\K) \subseteq \r_{2d}(\K)$ is straightforward. To prove the reverse inclusion note that the dual cone $C(\K)^*$ of $C(\K)$ is the cone $\p_{2d}(\K)$ of polynomials non-negative on $\K$, simply because the functionals $\ell_v$ with $v \in \K$ encode non-negativity on $\K$. By bi-duality it follows that
$C(\K)=\p_{2d}^*(\K)$.

For every $\ell \in \r_{2d}(\K)$, $\ell(p) \geq 0$ for all $p \in \p_{2d}(\K)$ and therefore
$\r_{2d}(\K) \subseteq \p_{2d}^*(\K)$. Thus we obtain
$$C(\K)\,\subseteq\,\r_{2d}(\K)\,\subseteq\,\p_{2d}^*(\K)\,=\,C(\K),$$
which yields the desired result.
\end{proof}
Lemma \ref{LEMMA Cone Def} can also be derived from extensions of Tchakaloff's theorem 
in Putinar \cite{put-tchaka} and Bayer and Teichman \cite{tchaka}. 

Now we can directly apply Theorem \ref{THEOREM Gradient Image} to $\r_{2d}(\K)$ with $\M=\K$. It still remains to check that the assumptions of Lemma \ref{LEMMA Assumption} hold for $\L(\K)$. Let $F$ be a maximal (by inclusion) face of $\r_{2d}(\K)$. Maximal faces of a convex cone are exposed. Therefore, there exists a form $p \in \p_{2d}(\K)$ such that $\ell(p)=0$ for all $\ell \in F$ and $\ell(p) >0$ for all $\ell \in\r_{2d}(\K) \setminus F$. Now suppose that $\L(v)=\ell_v \in F$. It follows that $\ell_v(p)=p(v)=0$. Therefore the set of $v \in \K$ for which $\L(v)$ is in $F$ corresponds precisely to the zeroes of $p$ in $\K$:
$$\{v \in \K \, \mid \, \L(v) \in F\}=\{ v \in \K \, \mid \, p(v)=0\}.$$
Since $K$ is the closure of an open set in $\R^n$ and the measure $\mu$ is absolutely continuous with respect to the Lebesque measure it follows that $\mu(\{v \in \K \, \mid \, \L(v) \in F\})=0$.

In summary we have proved the following result:

\begin{thm}
\label{thcompact}
Let $\K\subset\R^n$ be the closure of an open bounded subset.
Let $\mu$ be an arbitrary finite Borel measure on $\K$, absolutely continuous
with respect to the restriction of the Lebesgue measure on $\K$, with a density positive on $\K$, and let
\begin{equation}
\label{thcompact-1}
p\mapsto f(p)\,:=\,\int_\K e^{p(x)}\,d\mu(x),\qquad p\in\R[x]_{2d}.
\end{equation}
A sequence $\y\in\rd^*$ belongs to ${\rm int}\,\r_{2d}(\K)$ if and only if
\begin{equation}
\label{thcompact-2}
f^*(\y)\,:=\,\sup_p\:\left\{ \langle p,\y\rangle-f(p)\:\right\}\,<\,+\infty.
\end{equation}
In other words, ${\rm int}\,\r_{2d}(\K)$ is the domain of the Legendre-Fenchel transform of $f$.
\end{thm}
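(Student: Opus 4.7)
The plan is to assemble the three prior tools---Lemma \ref{LEMMA Cone Def}, Theorem \ref{THEOREM Gradient Image}, and Lemma \ref{LEMMA Assumption}---specialized to $V=\rd$, $\M=\K$, and the embedding $\L:\K\to\rd^*$ sending $v$ to the evaluation functional $\ell_v$. Most of the work is already laid out in the discussion preceding the theorem; the proposal below simply organizes the steps in order.

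First, I would invoke Lemma \ref{LEMMA Cone Def} to identify $\r_{2d}(\K)$ with the conical hull $C$ of $\L(\K)$. Under the pairing $\langle p,\ell_v\rangle=p(v)$, the function $f$ of (\ref{thcompact-1}) is precisely the abstract $f$ of (\ref{def-f}) for this embedding. Theorem \ref{THEOREM Gradient Image} therefore yields that the image of $\nabla f$ equals ${\rm int}\,\r_{2d}(\K)$, and \cite[Theorem 26.5]{roc} identifies this image with the interior of the domain of $f^*$. This settles the ``only if'' direction: whenever $\y\in{\rm int}\,\r_{2d}(\K)$, we have $f^*(\y)<+\infty$, and in fact the supremum defining $f^*(\y)$ is attained.

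For the ``if'' direction, I would verify the hypothesis of Lemma \ref{LEMMA Assumption}: for every face $F$ of $\r_{2d}(\K)$ the set $S_F:=\{v\in\K:\ell_v\in F\}$ has $\mu$-measure zero. Since every face is contained in a maximal one and the assignment $F\mapsto S_F$ is monotone, it suffices to treat maximal faces. A maximal face is exposed, so there exists $p\in\p_{2d}(\K)$, not identically zero on $\K$, with $F=\{\ell\in\r_{2d}(\K):\ell(p)=0\}$; hence $S_F$ is exactly the zero set of $p$ inside $\K$. Because $\K$ is the closure of an open subset of $\R^n$ and $\mu$ is absolutely continuous with respect to Lebesgue measure, this zero set has $\mu$-measure zero, as required. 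Lemma \ref{LEMMA Assumption} then gives $f^*(\y)=+\infty$ for $\y\in\partial\r_{2d}(\K)$. For $\y\notin\r_{2d}(\K)$, separation from the dual cone produces $p\in\p_{2d}(\K)$ with $\langle p,\y\rangle<0$; evaluating the Legendre--Fenchel supremum along $-\lambda p$ with $\lambda\to+\infty$ keeps $f(-\lambda p)\le\mu(\K)$ bounded while $\langle-\lambda p,\y\rangle\to+\infty$, so $f^*(\y)=+\infty$ there as well.

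Combining both directions, the (effective) domain of $f^*$ is exactly ${\rm int}\,\r_{2d}(\K)$, which is the claim. The only non-routine ingredient is the measure-zero verification for exposed faces, and that is precisely the place where the hypothesis that $\K$ is the closure of an open set is used in an essential way; everything else is a direct specialization of the abstract machinery built in Section \ref{prelim}.
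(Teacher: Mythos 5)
Your proposal is correct and follows essentially the same route as the paper: the proof of Theorem \ref{thcompact} there is exactly the assembly of Lemma \ref{LEMMA Cone Def}, Theorem \ref{THEOREM Gradient Image} and Lemma \ref{LEMMA Assumption}, with the measure-zero hypothesis verified via exposedness of maximal faces just as you do. Your explicit separation argument for $\y\notin\r_{2d}(\K)$ simply fills in a step the paper asserts without detail.
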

Observe that the function $f^*$ (resp. $\log f^*$) provides a barrier (resp. log-barrier) for the convex cone $\r_{2d}(\K)$.

Interestingly, the function $\log f$ is well-known to statisticians.  It is called the {\it log partition} (or, {\it cumulant}) function
associated with the so-called {\it potential functions} (or, {\it sufficient statistics}) $(x^\alpha)$, $\alpha\in\N^n_{2d}$.
Rephrased in our context, Theorem 3.3 in Wainwright and Jordan \cite{jordan}
 states that under weak hypotheses,
the mapping $\nabla (\log f)$ is onto the open convex set of moment sequences $\y\in\R[x]^*_{2d}$ that have a representing
measure $\nu$ absolutely continuous with respect to $\mu$ and total mass $1$.
Our result is an extension of \cite[Theorem 3.3]{jordan} as  we prove that
$\nabla f$ is onto ${\rm int}\,\r_{2d}(\K)$, i.e., the interior of the cone of sequences that have {\it arbitrary}
representing measures (as opposed to measures absolutely continuous w.r.t. $\mu$). Furthermore, we guarantee that the Legendre-Fenchel transform $f^*$ is finite
only on the interior of the cone of representable moment sequences, while \cite[Theorem 3.3]{jordan} makes no guarantees of the behavior of $(\log f)^*$ on the boundary.

\subsection{The truncated moment problem on $\R^n$.}
We now consider the more delicate case when $\K=\R^n$. In this case,
as $\K$ is not compact the above machinery cannot be applied directly with $\M=\K$ and a detour is needed.

Let $\r_{2d}(\R^n)\subset \rd^*$ denote the cone of linear functionals on $\rd$ that come from integration with respect to a finite, Borel measure supported on $\R^n$, i.e. the set of all linear functionals $\ell \in \rd^*$ which can be written in the form
$$\ell(p)=\int_{\R^n}p\,d\sigma \qquad \text{for some measure} \,\, \sigma.$$

Unlike the situation of the compact support $\K$, the cone $\r_{2d}(\R^n)$ is no longer closed. However, there is a nice way to represent the closure of $\r_{2d}(\R^n)$. It is well-known that the dual cone of $\r_{2d}(\R^n)$ is the cone $\p_{2d}(\R^n)$ of polynomials nonnegative on all of $\R^n$. By bi-duality, the closure of $\r_{2d}(\R^n)$ is the cone $\p_{2d}^*(\R^n)$, i.e.:
$$\p_{2d}^*(\R^n)=\overline{\r_{2d}(\R^n)}.$$

Given an arbitrary polynomial $p \in \rd$ we can homogenize $p$ by adding an extra variable $x_0$ and multiplying all monomials in $p$ by an appropriate power of $x_0$ so that all monomials have degree $2d$. Let $\overline{p}$ denote the homogenization of $p$. Conversely, we can de-homogenize $\overline{p}$ by setting $x_0=1$ to obtain $p$. If $p$ is a nonnegative polynomial, then $\overline{p}$ is a nonnegative form.

Let $V=\H_{n,2d}$ denote the vector space of all homogeneous forms in $n+1$ variables of degree $2d$. We can linearly identify $\rd$ with $\H_{n,2d}$ via homogenization. Let $\H\p_{2d}$ denote the cone of nonnegative forms on $\R^{n+1}$. From the above it follows that homogenization identifies the cone $\p_{2d}(\R^n)$ with the cone $\H\p_{2d}$.

Define $\H\r_{2d} \subset \H^*_{n,2d}$ to be the cone of linear functionals on $\H_{n,2d}$ given by integration with respect to a finite Borel measure on $\R^{n+1}$. For this example the compact set
$\M$ of Section \ref{prelim} will be the unit sphere $\s^n$ in $\R^{n+1}$. For $v \in \R^{n+1}$ let $\ell_v \in \H_{n,2d}^*$ be the linear functional given by evaluation at $v$, i.e.: $$p\mapsto \ell_v(p)=p(v) \qquad \text{for all} \,\, p \in \H_{n,2d}.$$

As before, let $\L(\M)$ be the set of linear functionals $\ell_v$ with $v \in \M\,(=\s^n)$. Let $H$ be the hyperplane in $\H_{n,2d}^*$ consisting of all functionals that evaluate to $1$ on $(x_0^2+\ldots+x_n^2)^d$. It is clear that $\L(\M)$ is a continuous embedding of $\s^n$ into $\H_{n,2d}^*$ and $\L(\M)$ lies in $H$. The analogue of the function $f$ in (\ref{def-f}) now reads
\[p\mapsto f(p)\,=\,\int_{\s^n}e^{\langle\ell_v,p\rangle}\,d\mu(v)\,=\,
\int_{\s^n}e^{p(v)}\,d\mu(v),\qquad p\in \H_{n,2d}.\]

We next show in the following Lemma that $\H\r_{2d}$ is a closed convex cone and in fact $\H\r_{2d}$ is the conical hull of $\L(\M)$. This is very similar to the situation in Lemma \ref{LEMMA Cone Def} and the proof is almost identical.

\begin{lemma}\label{LEMMA Cone Def1}
The cone $\H\r_{2d}$ of all linear functionals representable by a measure supported on $\R^n$ is the conical hull of the set $\L(\M)$ of linear functionals $\ell_v$ with $v \in \M=\s^n$.
\end{lemma}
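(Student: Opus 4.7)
The plan is to mirror the proof of Lemma \ref{LEMMA Cone Def} by using bi-duality between the cone of nonnegative forms and the cone of functionals representable by measures. Set $C:=\operatorname{ConicalHull}(\L(\M))$ with $\M=\s^n$. First I would verify that $C$ is closed: since $\L$ is continuous and $\s^n$ is compact, $\L(\M)$ is compact, and it lies in the affine hyperplane $H\subset\H_{n,2d}^*$ that does not contain the origin. The conical hull of a compact set contained in such an affine hyperplane is automatically closed, so $C$ is a closed convex cone.

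The inclusion $C\subseteq\H\r_{2d}$ is immediate: each $\ell_v$ with $v\in\s^n$ is integration against the Dirac measure at $v$, and nonnegative combinations of such functionals are themselves of the required integral form on $\R^{n+1}$. For the reverse inclusion I would compute the dual cone $C^*$. A form $p\in\H_{n,2d}$ lies in $C^*$ iff $\ell_v(p)=p(v)\geq 0$ for every $v\in\s^n$; because $p$ is homogeneous of even degree $2d$, the identity $p(\lambda\omega)=\lambda^{2d}p(\omega)$ upgrades nonnegativity on $\s^n$ to nonnegativity on all of $\R^{n+1}$. Hence $C^*=\H\p_{2d}$.

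Conversely, if $\ell\in\H\r_{2d}$ is given by integration against a finite Borel measure on $\R^{n+1}$, then $\ell(p)\geq 0$ for every $p\in\H\p_{2d}$, which shows $\H\r_{2d}\subseteq(\H\p_{2d})^*=C^{**}$. Since $C$ is a closed convex cone, the bi-duality theorem yields $C^{**}=C$, so stringing the inclusions together gives
\[
C\,\subseteq\,\H\r_{2d}\,\subseteq\,(\H\p_{2d})^*\,=\,C^{**}\,=\,C,
\]
forcing equality throughout.

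The only substantive point beyond the compact case is the passage from nonnegativity on $\s^n$ to nonnegativity on $\R^{n+1}$ when identifying $C^*$, and I expect this to be the sole place where any real checking is needed; it is immediate thanks to homogeneity and the evenness of the degree $2d$. Closedness of the conical hull and the bi-duality step are verbatim the same as in Lemma \ref{LEMMA Cone Def}, so the argument goes through essentially unchanged after the homogenization and the replacement of $\K$ by $\s^n$.
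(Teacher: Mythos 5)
Your proposal is correct and follows essentially the same route as the paper's own proof: closedness of the conical hull of the compact set $\L(\s^n)$ lying in the hyperplane $H$, the identification $C^*=\H\p_{2d}$ via the homogeneity/even-degree upgrade from nonnegativity on $\s^n$ to nonnegativity on $\R^{n+1}$, and the bi-duality sandwich $C\subseteq\H\r_{2d}\subseteq\H\p_{2d}^*=C$. No gaps.
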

\begin{proof}
Let $C(\M)$ denote the conical hull of $\L(\M)$. We note that since $\L(\M)$ is compact and included in the hyperplane
$H$ it follows that the cone $C(\M)$ is closed. The inclusion $C(\M) \subseteq \H\r_{2d}$ is straightforward. To prove the reverse inclusion note that the dual cone $C(\M)^*$ of $C(\M)$ is the cone $\H\p_{2d}$ of non-negative forms, simply because the functionals $\ell_v$ with $v \in \s^n$ encode non-negativity on $\s^n$ and by homogeneity on all of $\R^{n+1}$. By bi-duality it follows that
$C(\M)=\p_{2d}^*(\M)$.

For every $\ell \in \H\r_{2d}$, $\ell(p) \geq 0$ for all $p \in \H\p_{2d}$ and therefore
$\H\r_{2d} \subseteq \H\p_{2d}^*$. Thus we obtain
$$C(\M)\,\subseteq\,\H\r_{2d}\,\subseteq\,\H\p_{2d}^*\,=\,C(\M),$$
which yields the desired result.
\end{proof}

As we have seen in the proof of Lemma \ref{LEMMA Cone Def1}, the cone $\H\r_{2d}$ is dual to the cone $\H\p_{2d}$ of nonnegative forms. Via de-homogenization we can identify $\H\p_{2d}$ with the cone of nonnegative polynomials $\p_{2d}(\r^n)$. Therefore it follows that
$$\H\r_{2d}=\overline{\r_{2d}(\R^n)}=\p_{2d}^*(\R^n)=\H\p^*_{2d}.$$

Now we can apply Theorem \ref{THEOREM Gradient Image} to the cone $\H\r_{2d}$. As before, we need to check that the assumptions of Lemma \ref{LEMMA Assumption} hold for $\L(\M)$. Let $F$ be a maximal (by inclusion) face of $\H\r_{2d}$. Maximal faces of a convex cone are exposed. Therefore, there exists a form $p \in \H\p_{2d}$ such that $\ell(p)=0$ for all $\ell \in F$ and $\ell(p) >0$ for all $\ell \in \H\r_{2d} \setminus F$. Now suppose that $\L(v)=\ell_v \in F$. It follows that $\ell_v(p)=p(v)=0$. Therefore the set of $v \in \s^n$ for which $\L(v)$ is in $F$ corresponds precisely to the zeroes of $p$ in $\s^n$:
$$\{v \in \s^n \, \mid \, \L(v) \in F\}=\{ v \in \s^n \, \mid \, p(v)=0\}.$$
Since the measure $\mu$ is the rotation invariant probability measure on $\s^n$ it follows that $\mu(\{v \in \s^n \, \mid \, \L(v) \in F\})=0$.

In summary we have proved the following Theorem:
\begin{thm}
\label{thRn}
Let $\mu$ be the rotation invariant probability measure on the unit sphere $\s^n\subset\R^{n+1}$, and let
\begin{equation}
\label{thcompact-1Rn}
p\mapsto f(p)\,:=\,\int_{\s^n} e^{p(x)}\,d\mu(x),\qquad p\in \H_{n,2d}.
\end{equation}
A sequence $\y\in \H^*_{n,2d}$ belongs to ${\rm int}\,\H\r_{2d}$ if and only if
\begin{equation}
\label{thcompact-2Rn}
f^*(\y)\,:=\,\sup_p\:\left\{ \langle p,\y\rangle-f(p)\:\right\}\,<\,+\infty.
\end{equation}
In other words, ${\rm int}\,\H\r_{2d}$ is the domain of the Legendre-Fenchel transform of $f$.
\end{thm}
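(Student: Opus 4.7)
The plan is to specialize the abstract framework of Section \ref{prelim} to the homogenized setting: take $V := \H_{n,2d}$, take $\M := \s^n \subset \R^{n+1}$ (trivially the closure of an open subset of $\s^n$), take $\mu$ to be the rotation-invariant probability measure on $\s^n$, and let $\L \colon \s^n \to V^*$ send $v \mapsto \ell_v$, the point evaluation functional. The affine hyperplane $H \subset V^*$ consists of functionals taking the value $1$ on $(x_0^2 + \cdots + x_n^2)^d$; by compactness of $\s^n$ and continuity of $\L$, the image $\L(\s^n)$ is a compact subset of $H$. With this data, the function $f$ of (\ref{def-f}) coincides with (\ref{thcompact-1Rn}).

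First I would invoke Lemma \ref{LEMMA Cone Def1} to identify the conical hull of $\L(\s^n)$ with $\H\r_{2d}$, translating the abstract cone of Section \ref{prelim} into the moment cone of interest. Theorem \ref{THEOREM Gradient Image} then says that the image of $\nabla f$ is precisely ${\rm int}\,\H\r_{2d}$. Combining this with the Rockafellar fact already recorded in Section \ref{prelim}, namely that the image of $\nabla f$ coincides with the interior of the effective domain of $f^*$, yields the ``only if'' direction at once: if $\y \in {\rm int}\,\H\r_{2d}$ then $\y$ lies in the image of $\nabla f$, so $f^*(\y) < +\infty$ and the supremum is actually attained.

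For the converse I would appeal to Lemma \ref{LEMMA Assumption} to conclude $f^*(\y) = +\infty$ on $\partial\,\H\r_{2d}$; together with the standard fact that $f^* \equiv +\infty$ off $\H\r_{2d}$ (a separation argument, since $\H\r_{2d}$ is closed by Lemma \ref{LEMMA Cone Def1}), this gives $\{\y : f^*(\y) < +\infty\} \subseteq {\rm int}\,\H\r_{2d}$. The hypothesis to verify for Lemma \ref{LEMMA Assumption} is that $\mu(\{v \in \s^n : \L(v) \in F\}) = 0$ for every face $F$ of $\H\r_{2d}$. Since every proper face is contained in a maximal one and maximal proper faces of $\H\r_{2d}$ are exposed, it suffices to check exposed faces; any such $F$ has the form $F = \{\ell \in \H\r_{2d} : \ell(p) = 0\}$ for some nonzero $p \in \H\p_{2d}$ (by the duality $\H\r_{2d} = \H\p_{2d}^*$ from Lemma \ref{LEMMA Cone Def1}), so
\[\{v \in \s^n : \L(v) \in F\} = \{v \in \s^n : p(v) = 0\}\]
is the zero locus on $\s^n$ of a nonzero form.

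The main obstacle, exactly mirroring the situation in the compact case, is this last measure-zero step: one must verify that a nonzero form on $\s^n$ vanishes on a rotation-invariant-null subset of the sphere. This is standard, since the zero locus of a nonzero polynomial is a proper real-analytic subvariety of $\s^n$ of strictly smaller dimension and the rotation-invariant measure is absolutely continuous with respect to the surface measure; however, it is the one place where the homogenization-plus-sphere detour has to be shown compatible with the abstract framework of Section \ref{prelim}, and it is what ultimately justifies applying the two key results \ref{THEOREM Gradient Image} and \ref{LEMMA Assumption} in this non-compact setting.
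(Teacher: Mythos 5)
Your proposal is correct and follows essentially the same route as the paper: specialize the abstract framework to $V=\H_{n,2d}$, $\M=\s^n$, identify the conical hull of $\L(\s^n)$ with $\H\r_{2d}$ via Lemma \ref{LEMMA Cone Def1}, apply Theorem \ref{THEOREM Gradient Image} together with the Rockafellar identification of the image of $\nabla f$ with the interior of the domain of $f^*$, and verify the hypothesis of Lemma \ref{LEMMA Assumption} by reducing to maximal (hence exposed) faces and noting that the zero locus on $\s^n$ of a nonzero form in $\H\p_{2d}$ has rotation-invariant measure zero. This matches the paper's argument step for step.
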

Observe that the function $f^*$ (resp. $\log f^*$) provides a barrier (resp. log-barrier) for the convex cone $\H\r_{2d}$.\\

Finally, to relate the initial moment problem in $\R^n$ with
the one in $\R^{n+1}$ with homogenization, observe that a sequence $\y=(y_\alpha)\in \R[x_1,\ldots,x_n]_{2d}^*$
has a representing measure on $\R^n$ if and only if the sequence $\tilde{\y}\in\H^*_{n,2d}$ defined by:
\[\tilde{y}_{\alpha k}\,=\,y_\alpha,\qquad \forall (\alpha,k)\in\N^{n+1}_{2d},\quad \vert\alpha\vert+k=2d,\]
has a representing measure on $\R^{n+1}$. 
Indeed, $\y$ has a representing measure $\mu$ on $\R^n$ if and only if $\tilde{\y}$ has the representing (product) measure
$\mu\otimes \delta_{x_0=1}$ on $\R^{n+1}$ (where $\delta_{x_0=1}$ is the Dirac measure at $x_0=1$).

\subsection{General Non-Compact Case}\label{SUBSECTION Non-Compact}
With identical proofs via homogenization, the above discussion of the case $\K=\R^n$
can be extended to any set $\K$ which  is the closure an open subset of $\R^n$. We explain how to define the appropriate compact set $\M \subset \s^n$.

Embed $\K$ into $\R^{n+1}$ by introducing an extra coordinate and setting it equal to $1$. More formally, let $\K' \subset \R^{n+1}$ given by:
\[\K'=\left\{(x,1) \in \R^{n+1} \,\, \mid \,\, x \in \K\right\}.\]

\noindent Next, define, $\M' \subset \s^n$ as a rescaling of $\K'$ onto the unit sphere:

\[\M'=\left\{z \in \s^{n} \,\, \mid \,\, z\,=\,\lambda y \,\, \text{for some}\,\, \lambda \in \R, y \in \K'\right\}.\]

\noindent Finally, let $\M \subset \s^n$ be the closure of $\M'$: $\M=\overline{\M}'$. Take any finite measure $\mu$ on $\M$ that is absolutely continuous with respect to the rotation invariant probability measure on the unit sphere $\s^n$. This makes sense, since $\K$ is the closure of an open subset of $\R^n$, and therefore $\M$ is a closure of an open subset of $\s^n$. We note that with $\K=\R^n$ the above construction gives $\M=\s^n$.

At last, nonnegativity of polynomials on $\K$ is equivalent to nonnegativity of forms on $\M$. The rest of the proofs follow nearly word for word.
\section{Proofs of Lemmas}\label{SECTION Lemma Proofs}
\begin{proof}[Proof of Lemma \ref{LEMMA Convex}]
We observe that $\overline{A}$ ia a closed compact set. Since exposed extreme points are dense in the set of extreme points, it follows that $\overline{A}$ contains all extreme points of $B$ and by Krein-Milman Theorem the condition of the lemma is equivalent to $\overline{A}=B$.

Now suppose that there exists $x \in \operatorname{int} B$ such that $x \notin A$. Then by the Separation Theorem, there exists a hyperplane $H$ such that $x$ lies in the closed half-space $\overline{H}_+$ and $A$ lies in the closed half-space $\overline{H}_-$.

Since $x \in \operatorname{int} B$ it follows that there exist points $y \in B$, such that $y$ lies in the open half-space $H_-$. Since $A \subset \overline{H}_+$ we see that $\overline{A}$ cannot contain such points $y$, which is a contradiction.
\end{proof}
\begin{proof}[Proof of Lemma \ref{LEMMA Dirac}]

First we note that by dividing through by the maximum of $f$ we may restrict 
ourselves to the case $0<f(x)\leq 1$ for all $x \in \M$. Next, for every $\lambda\in\N$, define the probability measure
\[\nu_\lambda(B)\,:=\,\left(\displaystyle\int_{\M}f^\lambda \,d\mu\right)^{-1}\displaystyle\int_B f^\lambda\,d\mu,\qquad B\in\mathcal{B}(\M).\]

Fix $\epsilon>0$ and let $\M(\epsilon)\subset\M$ be the open set $\{x\in\M\,:\,f(x)<1-\epsilon\}$.
As $\mu$ has a density positive on $\M$,
\begin{eqnarray}
\label{aux1}
\lim_{\lambda\to\infty}\:\left(\int_{\M(\epsilon)}f^\lambda\,d\mu\right)^{1/\lambda}&=&
{\rm esssup}\,\{f(x)\,:\,x\in\M(\epsilon)\}\,=\,(1-\epsilon)\\
\label{aux2}
\lim_{\lambda\to\infty}\:\left(\int_{\M}f^\lambda\,d\mu\right)^{1/\lambda}&=&
{\rm esssup}\,\{f(x)\,:\,x\in\M\}\,=\,1.
\end{eqnarray}
Therefore,
\[(1-\epsilon)\,=\,\lim_{\lambda\to\infty}\frac{\left(\displaystyle\int_{\M(\epsilon)}f^\lambda\,d\mu\right)^{1/\lambda}}{\left(\displaystyle\int_{\M}f^\lambda\,d\mu\right)^{1/\lambda}}\,=\,\lim_{\lambda\to\infty}\nu_\lambda(\M(\epsilon))^{1/\lambda},\]
which in turn implies
$\lim_{\lambda\to\infty}\nu_\lambda(\M(\epsilon))=0$ for any fixed $\epsilon >0$. Therefore we also have $\lim_{\lambda\to\infty}\nu_\lambda(\M \setminus \M(\epsilon))=1$. Let us evaluate
\[\overline{\rho}\,:=\,\limsup_{\lambda\to\infty}\int_\M g\,d\nu_\lambda; \quad\underline{\rho}\,:=\,\liminf_{\lambda\to\infty}\int_\M g\,d\nu_\lambda.\]
Let $(\lambda_j)$, $j\in\N$, be a subsequence such that
\[\overline{\rho}:=\lim_{j\to\infty}\int_{\M} g\,d\nu_{\lambda_j},\]
and write
\[\overline{\rho}:=\lim_{j\to\infty}\left(\underbrace{\int_{\M(\epsilon)} g\,d\nu_{\lambda_j}}_{A^\epsilon_j}+
\underbrace{\int_{\M\setminus\M(\epsilon)} g\,d\nu_{\lambda_j}}_{B^\epsilon_j}\right)
\,=\,\lim_{j\to\infty}\left(A^\epsilon_j+B^\epsilon_j\right).\]
Observe that 
\[\vert A^\epsilon_j\vert\,\leq\,\sup_{x\in\M}\vert g(x)\vert\: \nu_{\lambda_j}(\M(\epsilon)),\qquad\forall\epsilon>0,\]
so that $\lim_{j\to\infty}A^\epsilon_j=0$. Therefore,
\[\overline{\rho}\,=\,\lim_{j\to\infty}\,B^\epsilon_j\,=\,\lim_{j\to\infty}\int_{\M\setminus\M(\epsilon)} g\,d\nu_{\lambda_j}.\]
Using  the Mean Value theorem,
\[\overline{\rho}\,=\,\lim_{j\to\infty}\,B^\epsilon_j\,=\,\lim_{j\to\infty}g(\xi_j)\,\nu_{\lambda_j}(\M\setminus\M(\epsilon)),\]
for some $\xi_j\in\M\setminus\M(\epsilon)$. Since $\lim_{\lambda\to\infty}\nu_\lambda(\M \setminus \M(\epsilon))=1$ we see that $$\overline{\rho}\,=\,\lim_{j\to\infty}g(\xi_j).$$
Next, we also have
\[(1-\epsilon')g(s)\,\leq\,g(x)\,\leq\,(1+\epsilon')g(s),\qquad\forall x\in\M\setminus\M(\epsilon),\]
with $\epsilon'\to0$ as $\epsilon\to0$. And so 
$\vert g(\xi_j)-g(s)\vert \leq \epsilon'g(s)$ which implies 
$$-\epsilon ' g(s)\leq\overline{\rho}-g(s)\leq \epsilon'g(s).$$

Exactly same arguments for the subsequence $(\lambda_k)$, $k\in\N$, such that
\[\underline{\rho}\,=\,\liminf_{\lambda\to\infty}\int_\M g\,d\nu_\lambda\,=\,\lim_{k\to\infty}\int_\M g\,d\nu_{\lambda_k},\]
may be used to prove $-\epsilon 'g(s)\leq\underline{\rho}-g(s)\leq\epsilon'g(s)$. Letting $\epsilon\to0$
(hence $\epsilon'\to0$) yields the desired result (\ref{result}).
\end{proof}

\section*{Acknowledgement}
The first author was partially supported by NSF and wishes to acknowledge financial support from the University of Toulouse, France,
for a visit to LAAS in June-July 2011, during which this research was performed.

\end{document}